\newtheorem{thm}{Theorem}
\newtheorem{lem}{Lemma}
\newtheorem{mydef}{Definition}
\newtheorem{thmx}{Theorem}
\newenvironment{customthm}[1]
  {\innercustomthm}
  {\endinnercustomthm}
\title{On the Annihilation of Thin Sets}
\date{\vspace{-8ex}}
\author{Tomer Amit and Alexander Olevskii\footnote{This author's research is supported in part by the Israel Science Foundation.}}
\affil{
T.A.: School of Mathematics, Tel Aviv University \\
Ramat Aviv, 69978 Israel. E-mail: tomer1amit@gmail.com \\
A.O.: School of Mathematics, Tel Aviv University \\
Ramat Aviv, 69978 Israel. E-mail: olevskii@post.tau.ac.il}
\begin{document}
\maketitle
\begin{abstract}
One says that a pair of sets $(S,Q)$ in $\mathbb{R}$ 
                     is 'annihilating'
                     if no function can be concentrated on $S$ while having its Fourier 
                     transform concentrated on $Q$.
                     One uses to distinguish between weak and strong annihilation types.
                     It is well known that if both sets $S$ and $Q$ are of finite measure
                     then they are strongly annihilating.
                     
                     In this paper we prove that if $S$ is a set of finite measure
                     with periodic gaps, and $Q$ is a set of density zero, then
                     weak annihilation holds. 
                     On the other hand a counter-example is constructed, showing
                     that strong annihilation, in general, does not.
\end{abstract}
\section{Introduction}\label{sec:Introduction}
The following definition is inspired by the classical uncertainty principle (for an extensive account of this subject, see \cite{Havin}):
\begin{mydef}
Let $S,Q \subset \mathbb{R}$ be measurable sets. 

\begin{enumerate}[(i)]
	\item Say $(S,Q)$ is \textsl{weakly annihilating} if
\begin{equation*}
\forall f \in L^2(\mathbb{R}), \mathrm{supp}(f) \subset S \text{ and } \mathrm{spec}(f) \subset Q \Longrightarrow f=0 \text{ identically}.
\end{equation*}

\item Say $(S,Q)$ is \textsl{strongly annihilating} if 
\begin{equation*}
\exists C = C(S,Q)>0, \forall f \in L^2(\mathbb{R}), \left\|f\right\|_2^2 \leq C (\left\|f\right\|_{L^2(S^c)}^2 + \left\|\mathcal{F}{f}\right\|_{L^2(Q^c)}^2).
\end{equation*}

\end{enumerate}
\end{mydef}
We use the Fourier transform with the following normalization
\[
\mathcal{F}f(\xi) = \widehat{f}(\xi) = \int\limits_{\mathbb{R}} e^{-2\pi i x \cdot \xi } f(x) \mathrm{d}x.
\]
The support and spectrum of a function on $\mathbb{R}$ are defined respectively as
\[
\mathrm{supp}(f) = \{x \in \mathbb{R} \ | \ f(x) \neq 0\}
\]
and $\mathrm{spec}(f) = \mathrm{supp}(\widehat{f})$. Also, throughout the paper $\left|\cdot\right|$ denotes the Lebesgue measure on $\mathbb{R}$.
\newline
 It is well known that that if both $S$ and $Q$ have finite 
                      Lebesgue measure then $(S,Q)$ is strongly annihilating (\cite{Ben}, \cite{AB}).

                       In \cite{Wolff} a class of sets was defined, so called  '$\epsilon$-thin'
                     sets, which may have infinite measure, but have small density.
                     It was proved in that paper that the strong annihilation property
                     for any pair of sets in this class holds.

                       In the present paper we consider a non-symmetric situation, when
                     $S$ is of finite measure and $Q$ has density zero.                    
\begin{mydef}
We say that a set $Q$ is a set of density zero if 
\[
\left|Q \cap (-r,r)\right| = o(r) \text{ as } r \rightarrow \infty.
\]
\end{mydef}
\begin{mydef}
We say that a set $S$ has \textsl{periodic gaps} if there is an interval $I$ and a number $T>0$ such that
\[
I + kT \subset S^c, \ \forall k \in \mathbb{Z}.
\]
\end{mydef}
We remark that the role of periodic gaps in the uniqueness problem was recently clarified in \cite{Ol2}. \newline
We will prove the following theorems.
\begin{thm}\label{thm:THM1}
Let $S$ be a set of finite measure with periodic gaps, and let $Q$ be a set of density zero. Then the pair $(S,Q)$ is weakly annihilating.
\end{thm}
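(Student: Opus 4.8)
The plan is to convert the line problem into a family of problems on the circle by periodizing with a continuous modulation parameter (a Zak-transform-type decomposition), and then to play the density-zero hypothesis on $Q$ against a lower bound on the spectral density of each periodization. First I would make two harmless reductions. Rescaling, assume the period is $T=1$ and that the gap interval $I\subset[0,1)$ has length $\ell\in(0,1)$; if $\ell\ge 1$ the translates $I+k$ already cover $\mathbb{R}$, forcing $|S|=0$ and $f=0$. Since $\mathrm{supp}(f)\subset S$ has finite measure, Cauchy--Schwarz gives $f\in L^1(\mathbb{R})$, so $\widehat{f}$ is continuous and all the pointwise values used below are genuine. For $\theta\in[0,1)$ set
\[
F_\theta(x)=e^{2\pi i\theta x}\sum_{k\in\mathbb{Z}} f(x+k)\,e^{2\pi i k\theta},
\]
which is a $1$-periodic function lying in $L^1(\mathbb{T})$.

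Next I would record two structural facts about $F_\theta$. For $x\in I$ every translate $x+k$ lies in the gap $I+k\subset S^c$, so each summand vanishes and $F_\theta\equiv 0$ on $I$; thus $F_\theta$ is a periodic function vanishing on an arc of length $\ell$. A direct computation of the Fourier coefficients gives $\widehat{F_\theta}(n)=\widehat{f}(n-\theta)$, so the spectrum $\Lambda_\theta:=\{n\in\mathbb{Z}:\widehat{F_\theta}(n)\neq 0\}$ satisfies $\Lambda_\theta\subset\{n:\ n-\theta\in Q\}$ because $\mathrm{spec}(f)\subset Q$. Moreover $\int_0^1\sum_n|\widehat{f}(n-\theta)|^2\,d\theta=\|f\|_2^2<\infty$, so $F_\theta\in L^2(\mathbb{T})$ for a.e.\ $\theta$, and since the map $(n,\theta)\mapsto n-\theta$ sweeps out almost every $\xi\in\mathbb{R}$, the vanishing of all coefficients for a.e.\ $\theta$ forces $\widehat{f}=0$ a.e.\ and hence $f=0$. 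It therefore suffices to show that $F_\theta=0$ for a.e.\ $\theta$.

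The heart of the argument is a dichotomy for the density of $\Lambda_\theta$ in $\mathbb{Z}$. On one hand, if $F_\theta\neq 0$ then, being supported on the complementary arc of length $1-\ell$, after a rotation of the circle (which leaves $\Lambda_\theta$ unchanged) it is the sequence of values at the integers of $\widehat{h}$ for some $h\in L^2(\mathbb{R})$ supported on an interval of length $1-\ell$. By Paley--Wiener $\widehat{h}$ is entire of exponential type, and the density-of-zeros theorem for $L^2$ functions of exponential type bounds the density of its zeros by the length $1-\ell$ of the support of $h$; in particular the integer zeros, namely $\mathbb{Z}\setminus\Lambda_\theta$, have upper density at most $1-\ell$, so $\Lambda_\theta$ has lower density at least $\ell$. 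On the other hand, integrating the containment above and using that $Q$ has density zero,
\[
\int_0^1 \#\bigl(\Lambda_\theta\cap[-N,N]\bigr)\,d\theta \le \sum_{|n|\le N}\bigl|Q\cap[n-1,n]\bigr| = \bigl|Q\cap[-N-1,N]\bigr| = o(N),
\]
whence $\int_0^1 \#(\Lambda_\theta\cap[-N,N])/(2N)\,d\theta\to 0$. Since the integrands are nonnegative, Fatou's lemma gives $\int_0^1 \liminf_N \#(\Lambda_\theta\cap[-N,N])/(2N)\,d\theta=0$, so the lower density of $\Lambda_\theta$ vanishes for a.e.\ $\theta$. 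Comparing the two conclusions, $F_\theta\neq 0$ is impossible for a.e.\ $\theta$, which is exactly what was needed.

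I expect the main obstacle to be the sharp lower bound $\ell$ on the spectral density of a nonzero periodic function vanishing on an arc of length $\ell$: a naive Jensen-type count loses the constant, and one must invoke the precise density-of-zeros statement for functions of exponential type that are bounded (or $L^2$) on the real line. A secondary, purely technical point is the bookkeeping that makes the chain $F_\theta\in L^2$ a.e., the coefficient identity $\widehat{F_\theta}(n)=\widehat{f}(n-\theta)$, and the Fatou step fully rigorous; these are routine once the $L^1\cap L^2$ membership of $f$ and the Plancherel identity above are in hand.
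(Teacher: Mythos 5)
Your argument is correct, but it takes a genuinely different route from the paper. The paper proves the stronger Theorem 1$'$: using the Olevskii--Ulanovskii machinery (the projection of $S$ onto $[0,1]$, the multiplicity-weighted space $X$, and the lemma asserting that a union $\bigcup_j(Z_j+a_j)$ over a dense sequence $\{a_j\}$ is a uniqueness set for $\mathrm{PW}_S$) together with the Beurling--Malliavin theorem, it builds a discrete uniqueness set for $\mathrm{PW}_S$ lying entirely in $Q^c$; weak annihilation then follows by evaluating $\widehat f$ on that set. You instead periodize $f$ with a continuous modulation parameter (a Zak-transform decomposition), note that each $F_\theta$ is an $L^2(\mathbb{T})$ function vanishing on the gap arc whose nonzero Fourier coefficients sit over $Q$, and run a density dichotomy: the Levinson/Cartwright zero-density theorem forces the spectrum $\Lambda_\theta$ to have lower density at least $\ell$ whenever $F_\theta\neq 0$, while averaging over $\theta$ and Fatou force that lower density to vanish for a.e.\ $\theta$ because $Q$ has density zero. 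Your averaging step is structurally the same trick as the paper's lemma on $G(\alpha,r)$ (both exploit density zero of $Q$ by integrating over translates of $\mathbb{Z}$), but the uniqueness input is different: you replace Beurling--Malliavin and the union lemma by the classical density-of-zeros bound for functions of exponential type bounded on $\mathbb{R}$. What your route buys is brevity and self-containedness, and---since the Zak transform is unitary on $L^2$---it uses the finite measure of $S$ only to secure $f\in L^1$, which suggests the hypothesis could be weakened; what it does not deliver is the paper's stronger structural conclusion that a \emph{discrete} uniqueness set for $\mathrm{PW}_S$ avoiding $Q$ exists. You are right to single out the sharp constant as the crux: the dichotomy needs the zero density of $\widehat h$ to be strictly less than $1$, so a crude Jensen-type count giving $C(1-\ell)$ with $C>1$ is vacuous for small $\ell$, whereas Levinson's theorem for the Cartwright class gives exactly $1-\ell$ and closes the argument.
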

\begin{thm}\label{thm:THM2}
There is a set $S$ of finite measure with periodic gaps and a set $Q$ of density zero, such that $(S,Q)$ is \textbf{not} strongly annihilating. 
\end{thm}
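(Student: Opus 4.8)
The plan is to exhibit an explicit pair $(S,Q)$ together with a \emph{normalized} sequence $f_n \in L^2(\mathbb{R})$ that sits genuinely on $S$ while its spectrum becomes ever more concentrated on $Q$, so that the ratio governing the strong-annihilation inequality blows up. Concretely, I take $S = [0,1]$. This has periodic gaps in the sense of the definition: choosing $I = (2,3)$ and $T = 4$ gives $I + kT \subset [0,1]^c = S^c$ for every $k \in \mathbb{Z}$, and of course $|S| = 1 < \infty$. I fix once and for all a nonzero $h \in L^2(\mathbb{R})$ with $\mathrm{supp}(h) \subseteq [0,1]$ and $\left\|h\right\|_2 = 1$ (a smooth bump gives the cleanest tail estimates, but any such $h$ works), and I set $f_n(x) = e^{2\pi i \xi_n x} h(x)$ for frequencies $\xi_n \to \infty$ to be chosen. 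Modulation is an $L^2$-isometry that translates the transform, so $\left\|f_n\right\|_2 = 1$ and $\widehat{f_n}(\xi) = \widehat{h}(\xi - \xi_n)$; moreover $\mathrm{supp}(f_n) = \mathrm{supp}(h) \subseteq S$, so the time-leakage term vanishes identically, $\left\|f_n\right\|_{L^2(S^c)} = 0$.

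The whole game is therefore to design $Q$ so that $\widehat{f_n}$ leaks out of $Q$ by an amount tending to $0$ while $Q$ nonetheless has density zero. I take $Q = \bigcup_n (\xi_n - r_n, \xi_n + r_n)$, a union of windows centered at the modulation frequencies. Since $\widehat{f_n}$ is the fixed profile $\widehat{h}$ recentered at $\xi_n$, and since $(\xi_n - r_n, \xi_n + r_n) \subseteq Q$ forces $Q^c \subseteq \{\xi : |\xi - \xi_n| \ge r_n\}$, a change of variable gives
\[
\left\|\widehat{f_n}\right\|_{L^2(Q^c)}^2 = \int_{Q^c} \bigl|\widehat{h}(\xi - \xi_n)\bigr|^2 \, \mathrm{d}\xi \;\le\; \int_{|u| \ge r_n} \bigl|\widehat{h}(u)\bigr|^2 \, \mathrm{d}u,
\]
and the right-hand side tends to $0$ as $r_n \to \infty$ because $\widehat{h} \in L^2$. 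Thus the leakage requirement forces me to let the window radii $r_n$ grow without bound, and this is the crux: widening the windows pushes against density zero. The tension is resolved by placing the centers super-sparsely, for instance $\xi_n = 4^n$ with $r_n = n$. Then the windows are eventually disjoint and $r_n \to \infty$ as needed, while for any $R$ the number of windows meeting $(-R,R)$ is $O(\log R)$ and their total length is $O((\log R)^2) = o(R)$, so $|Q \cap (-R,R)| = o(R)$; that is, $Q$ has density zero.

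With these choices $\left\|f_n\right\|_2^2 = 1$ for every $n$, whereas $\left\|f_n\right\|_{L^2(S^c)}^2 + \left\|\widehat{f_n}\right\|_{L^2(Q^c)}^2 \le \int_{|u|\ge r_n} |\widehat{h}(u)|^2 \, \mathrm{d}u \to 0$. Hence no constant $C = C(S,Q)$ can satisfy the strong-annihilation inequality for all of these $f_n$, so $(S,Q)$ is not strongly annihilating; by Theorem~\ref{thm:THM1} the same pair is nonetheless weakly annihilating, which is exactly the contrast asserted. I expect the only genuine obstacle to be the bookkeeping of the previous paragraph, namely certifying that $r_n \to \infty$ is compatible with density zero. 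If an unbounded $S$ of finite measure with small-period gaps is preferred over the bounded interval, one relocates $h$ into the allowed part of a single period of $S$; this changes none of the estimates, since they use only that $\mathrm{supp}(h) \subseteq S$ and $\widehat{h} \in L^2$.
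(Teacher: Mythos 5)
Your proof is correct as a proof of Theorem \ref{thm:THM2} exactly as stated, and it is a genuinely different, far more elementary route than the paper's. Your $S=[0,1]$ has finite measure and periodic gaps, your lacunary $Q=\bigcup_n(4^n-n,\,4^n+n)$ has density zero since $|Q\cap(-R,R)|=O((\log R)^2)=o(R)$, and the modulated bumps $f_n$ satisfy $\|f_n\|_2=1$ while $\|f_n\|_{L^2(S^c)}^2+\|\widehat{f_n}\|_{L^2(Q^c)}^2\le\int_{|u|\ge n}|\widehat{h}(u)|^2\,\mathrm{d}u\to 0$, which rules out any uniform constant $C$. The comparison with the paper is instructive. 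The paper builds $S=\bigcup_n S_n$ out of periodized combs of shrinking total measure and takes for the test function a dilated Fej\'er kernel times a slowly varying factor, precisely so that the resulting $Q_n$ is a union of very short intervals, one per period $1/N$, with every interval of length $\ge 1/N$ meeting $Q_n$ in relative measure $\le 1/n$; as Remark 2 records, this makes $Q$ $\epsilon$-thin in the sense of \cite{Wolff}, a much stronger smallness condition than density zero. Your $Q$ is the opposite: it contains full intervals of length $2n\to\infty$, so it is maximally non-thin near the points $4^n$, and indeed your construction works only because $Q^c$ fails to be relatively dense --- by the Logvinenko--Sereda theorem, if $Q^c$ had positive relative density then every $f$ supported in the fixed compact $[0,1]$ would satisfy $\|\widehat{f}\|_{L^2(Q^c)}\ge c\|f\|_2$ and the example would collapse. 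So the trade-off is clear: your argument settles the theorem as literally stated in a few lines, while the paper's heavier construction (which necessarily uses an unbounded $S$) is what is required for the sharper assertion that even an $\epsilon$-thin $Q$ paired with a finite-measure $S$ with periodic gaps need not be strongly annihilating.
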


\section{Payley-Wiener Spaces and Uniqueness Sets}\label{sec:PW}
   The proof of Theorem \ref{thm:THM1} is based on an approach developed in \cite{OU11}.
                      This approach allows one to construct discrete uniqueness sets
                      for Paley-Wiener spaces with (unbounded) spectrum of finite
                      measure.
  Below we present the necessary material from \cite{OU11} (see also \cite{Ol}, Lecture 10).
		\newline
		We let $S \subset \mathbb{R}$ be a set of finite measure, and $F \in L^2(S)$. Recall that the Payley-Wiener space associated to $S$, denoted by $\mathrm{PW}_S$ is defined as the image of $L^2(S)$ under $\mathcal{F}^{-1}$, i.e, functions with Fourier transform 
		supported on $S$. We make the following definition:
  \begin{mydef}   A set $\Lambda$ is a \textsl{uniqueness set (US)} for $\mathrm{PW}_S$ if
\[
                              f \in \mathrm{PW}_S , f|_{\Lambda} = 0  \Rightarrow f=0 \text{ identically}.
\]
                      \end{mydef}
											It is well known that if the spectrum $S$ is bounded, then 
                       any function in the Paley-Wiener space is a trace of an entire function
                       of exponential type. Using this fact one can show that a uniformly discrete US for $\mathrm{PW}_S$ in this case exists.
											\newline
											A set $\Lambda$ is called \textsl{uniformly discrete (u.d.)} if 

                             \[
														\inf\{ |\lambda-\lambda'| \ | \ \lambda \neq \lambda', \lambda,\lambda' \in \Lambda \}>0.
														\]

                        For unbounded spectra of finite measure no analyticity,
                       or even smoothness of $f$ holds (it is merely continuous).
                        Nevertheless discrete uniqueness sets do exist.

                       \begin{thmx}\cite{OU11}\label{thmx:OU}
                       Let $S$ be a set of finite measure. Then there is a u.d. set $\Lambda$,
                       which is a US for PWs. \end{thmx}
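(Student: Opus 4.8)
The plan is to recast Theorem~A as a completeness statement for an exponential system in $L^2(S)$, and then to exploit the finiteness of $|S|$ through periodization, which is the only structural feature we are given. For $f \in \mathrm{PW}_S$ write $g = \widehat{f} \in L^2(S)$; then for real $\lambda$ we have $f(\lambda) = \int_S g(\xi)\, e^{2\pi i \lambda \xi}\, \mathrm{d}\xi$, so the sample $f(\lambda)$ is exactly the pairing of $g$ with the exponential $e_\lambda(\xi) = e^{2\pi i \lambda \xi}$ in $L^2(S)$. Consequently $f|_\Lambda = 0 \Rightarrow f \equiv 0$ for every $f \in \mathrm{PW}_S$ precisely when the system $\{e_\lambda\}_{\lambda \in \Lambda}$ is complete in $L^2(S)$. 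Thus I would reduce the theorem to: \emph{construct a uniformly discrete $\Lambda$ whose exponential system spans $L^2(S)$}.

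The engine is periodization along a lattice. For the lattice $a\mathbb{Z}$ with $b = 1/a$, Poisson-type bookkeeping shows $f|_{a\mathbb{Z}} = 0$ if and only if the $b$-periodization $\sum_k \widehat{f}(\cdot + kb)$ vanishes a.e., and for a shifted lattice $a\mathbb{Z} + c$ the condition becomes $\sum_k \widehat{f}(\xi + kb)\, z^k = 0$ with $z = e^{2\pi i c b}$. Here is where $|S| < \infty$ enters decisively: since $\int_0^b \#\{k : \xi + kb \in S\}\, \mathrm{d}\xi = |S| < \infty$, for almost every $\xi$ the fiber $\{\xi + kb\} \cap S$ is \emph{finite}, so fiberwise the vanishing condition is that a Laurent polynomial in $z$ is zero. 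When $S$ is bounded, choosing $b$ larger than the diameter makes every fiber a singleton, and a single lattice already gives a uniformly discrete uniqueness set—recovering the classical case. For fibers supported in a fixed index window, finitely many shifts $c$, chosen so the corresponding points $z$ are distinct, force all fiber values to vanish by a Vandermonde argument, and finitely many shifts keep the union of lattices uniformly discrete.

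To pass from bounded to arbitrary finite-measure $S$, I would assemble the set multiscale. Decompose $S$ into a bounded bulk and a tail of small measure; the tail is the sole source of high fiber multiplicity. A base lattice together with finitely many shifts resolves the bulk, and I would then adjoin successive batches of additional sampling points to annihilate the progressively smaller subsets of $S$ on which the fiber multiplicity is large, driving the support of any putative nonzero $\widehat{f}$ down to measure zero.

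The hard part will be reconciling ``enough conditions'' with uniform discreteness. Because the tail is unbounded, fiber multiplicities are unbounded pointwise, and the naive remedy—infinitely many shifts within one period—destroys uniform discreteness by clustering the shifts. The resolution I would pursue rests on the observation that high multiplicity lives only on small sets: $\int (\,\#\{k : \xi + kb \in S\} - 1\,)^{+}\, \mathrm{d}\xi = |S| - |\pi_b(S)| \to 0$ as the relevant measure is exhausted. By Landau-type heuristics a spectral piece of small measure needs only low sampling density, hence only a sparse, widely separated batch of extra points; placing the successive batches in disjoint regions of $\mathbb{R}$ keeps the global infimum of gaps positive. Verifying that this separation can be maintained while the batches still suffice to kill every fiber is, I expect, the main technical obstacle and the place where the construction from \cite{OU11} must be invoked in full.
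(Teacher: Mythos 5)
Your reduction to completeness of exponentials and the periodization along a lattice are sound, and they coincide with the framework the paper imports from \cite{OU11}: the projection $A=\mathrm{proj}(S)$ onto a period interval and the multiplicity function $w(t)=\#\{k:\ t+k\in S\}$ are exactly your fibers and fiber multiplicities. But the proposal does not actually prove the theorem, for two reasons. First, the Vandermonde step is flawed as stated: for a fiber whose nonzero entries sit at arbitrary integers $k_1<\dots<k_p$ (not confined to a window of length $m$), distinctness of the nodes $z_i$ on the unit circle does not guarantee that the generalized Vandermonde minors $\det(z_i^{k_j})$ are nonzero (take $z_1=1$, $z_2=-1$ and exponents $0,2$); since $S$ is unbounded, even fibers of low multiplicity typically have widely spread indices, so this is the generic case, not a side case. (It is repairable by choosing the $z_i$ generically so as to avoid countably many Schur-polynomial zero sets, but that must be said.) Second, and decisively, the step that carries all the difficulty --- annihilating the fibers of unbounded multiplicity by ``sparse batches'' of extra points while keeping the whole set uniformly discrete --- is not carried out; you explicitly defer it to ``the construction from \cite{OU11}''. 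Since Theorem \ref{thmx:OU} \emph{is} the theorem of \cite{OU11}, this is circular. Note also that each fiberwise equation couples the ``bulk'' and ``tail'' entries of that fiber, so one cannot resolve the bounded part of $S$ first and treat the tail separately: a single fiber of multiplicity greater than $m$ defeats all $m$ equations at once.

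It is worth contrasting with what the paper (following \cite{OU11}) does instead. Rather than solving the fiber equations by linear algebra, one observes that for $f\in\mathrm{PW}_S$ and a shift $a$, the periodization of $\widehat{f}\cdot e^{2\pi i a(\cdot)}$ lies, by Cauchy--Schwarz against the weight $w$, in the dual weighted space $X^{\ast}=L^2(A,1/w)\subset L^1(A)$, and its Fourier coefficients are precisely the samples $f(n+a)$, $n\in\mathbb{Z}$. One then needs integer sets $Z_j$, \emph{pairwise disjoint}, each of which is a uniqueness set for $\widehat{X^{\ast}}$ (this is where a Beurling--Malliavin-type input enters, e.g.\ via dyadic blocks distributed among the $Z_j$), together with a sequence of shifts $a_j$ dense in $[0,1]$; Lemma \ref{lem:MAINLEM} then yields that $\bigcup_j(Z_j+a_j)$ is a uniqueness set for $\mathrm{PW}_S$, and the disjointness of the $Z_j$ inside $\mathbb{Z}$ is what preserves uniform discreteness. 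The density of the shifts $\{a_j\}$ replaces your Vandermonde argument, and the partial lattices $Z_j+a_j$ replace your ``sparse batches''; making those two substitutions precise is exactly the content your proposal is missing.
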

                       
                        Assume for simplicity  $\left|S\right|<1$.
                        \begin{mydef} Let  $t \in [0,1]$ and assume that $\exists k \in \mathbb{Z}$ such that $t+k \in S$.
                            The set $A$ of all such $t$ is called call \textsl{the projection of $S$} and is denoted $\mathrm{proj}(S)$.
													\end{mydef}
                        Introduce the $\textsl{multiplicity function}$ as 
												\[
                                   w(t):[0,1]\rightarrow \mathbb{N}, w(t) = \#\{k \text{ in the def. above}\}.
												\]
                        Define the weighted space:
                                 \[
																X= L^2(A;w) := \{ F \ | \  \left\|F\right\|^2 := \int \left|F\right|^2 w dt < \infty \}.
																\]

                        It is easy to see that the dual space $X^{\ast}$ can be identified with
                         $L^2(A,1/w)$ and is embedded in the space $L^1(A)$.

                        The following lemma plays a crucial role in the proof of Theorem \ref{thmx:OU}.

                        \begin{lem}\cite{OU11}\label{lem:MAINLEM} Let $Z_j, \ j=1,2,...$ be pairwise disjoint sets
                        of integers.
                              Assume that for each j the set $Z_j$ is a uniqueness set for
                              the space  
															\[
                                     \widehat{X^{\ast}} := \{ g= \widehat{G} , G \in X^{\ast}\}.
																		\]
 
                              Then the set
                                      \[
																			\Lambda := \bigcup\limits_{j=1}^{\infty}(Z_j + a_j),                           
																			\]

                              is a US for $\mathrm{PW}_S$, whenever $\{a_j\}$  is a sequence of numbers 
                              dense on [0,1]. \end{lem}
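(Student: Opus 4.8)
The plan is to take an arbitrary $f \in \mathrm{PW}_S$ with $f|_{\Lambda} = 0$ and deduce $f \equiv 0$, by reducing the vanishing of $f$ on each translated block $Z_j + a_j$ to the vanishing of an auxiliary function in $X^{*}$, to which the uniqueness-set hypothesis applies. Write $\Phi = \widehat{f} \in L^2(S)$, so that $f(x) = \int_S e^{2\pi i x \xi}\Phi(\xi)\,d\xi$. The first step is to fold this integral over the integer lattice: writing each $\xi \in S$ as $\xi = t + k$ with $t \in A = \mathrm{proj}(S)$ and $k \in K(t) := \{k \in \mathbb{Z} : t+k \in S\}$ (so that $\#K(t) = w(t)$), one obtains
\[
f(x) = \int_A \Bigl(\sum_{k \in K(t)} e^{2\pi i x k}\,\Phi(t+k)\Bigr) e^{2\pi i x t}\,dt.
\]

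Next I would evaluate at the sampling points $x = z + a_j$ with $z \in Z_j \subset \mathbb{Z}$. Since $e^{2\pi i z k}=1$ for integers $z,k$, the inner sum depends on $x$ only through $a_j$, and one gets $f(z+a_j) = \int_A e^{2\pi i z t}\,G_j(t)\,dt$, where
\[
G_j(t) := e^{2\pi i a_j t}\sum_{k \in K(t)} e^{2\pi i a_j k}\,\Phi(t+k), \qquad t \in A.
\]
The crucial point, and the step I expect to carry the real content, is that $G_j$ lies in $X^{*} = L^2(A;1/w)$: applying Cauchy--Schwarz to the $w(t)$-term sum,
\[
\int_A |G_j(t)|^2\,\frac{dt}{w(t)} \;\le\; \int_A \sum_{k \in K(t)}|\Phi(t+k)|^2\,dt \;=\; \|\Phi\|_{L^2(S)}^2 \;<\; \infty .
\]
This estimate is exactly tight for the weight $1/w$, which is precisely why the dual space $X^{*}$ is defined with that weight.

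Now the numbers $f(z+a_j)$ are the values of $\widehat{G_j}$ at integer points (up to the reflection $z \mapsto -z$, which one absorbs by passing to $\overline{G_j} \in X^{*}$, whose transform satisfies $\widehat{\overline{G_j}}(z) = \overline{f(z+a_j)}$). Hence $f|_{Z_j + a_j}=0$ forces $\widehat{G_j}$ to vanish on $Z_j$, and since $Z_j$ is by hypothesis a uniqueness set for $\widehat{X^{*}}$, we conclude $G_j = 0$ as an element of $X^{*}$; that is, for a.e. $t \in A$,
\[
\sum_{k \in K(t)} e^{2\pi i a_j k}\,\Phi(t+k) = 0 .
\]

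Finally I would exploit the density of $\{a_j\}$ in $[0,1]$. Discarding the countable union of the $j$-dependent exceptional null sets, fix a generic $t \in A$; then the finite exponential sum $a \mapsto \sum_{k \in K(t)} e^{2\pi i a k}\Phi(t+k)$ is a trigonometric polynomial with distinct integer frequencies that vanishes on the dense set $\{a_j\}$. Being continuous, it vanishes identically, so all of its coefficients are zero, i.e.\ $\Phi(t+k)=0$ for every $k \in K(t)$. Thus $\Phi = 0$ a.e.\ on $S$ and $f \equiv 0$, showing that $\Lambda$ is a US for $\mathrm{PW}_S$. The only points demanding care are the measure-theoretic bookkeeping of the null sets depending on $j$ and the sign/conjugation convention linking $f(z+a_j)$ to $\widehat{G_j}$; the analytic heart is the tight Cauchy--Schwarz bound that places $G_j$ in the correctly weighted dual space.
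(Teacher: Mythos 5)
Your argument is correct, and it is essentially the standard proof of this lemma from \cite{OU11} (the paper itself only quotes the statement without proof): fold $\widehat{f}$ over $\mathbb{Z}$ onto $A=\mathrm{proj}(S)$, observe via Cauchy--Schwarz that the folded function $G_j$ lands exactly in $X^{*}=L^2(A;1/w)$, apply the uniqueness hypothesis on each $Z_j$, and then use density of $\{a_j\}$ together with the finiteness of $w(t)$ a.e.\ to kill every coefficient $\Phi(t+k)$. Your handling of the two delicate points --- the conjugation fixing the sign convention so that the samples land on $Z_j$ rather than $-Z_j$, and the union of the $j$-dependent null sets --- is also correct, so I have nothing to add.
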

															
	\section{Proof of Theorem \ref{thm:THM1}}
		It was noticed in \cite{OU11} that the set $\Lambda$ in Theorem \ref{thmx:OU} can be
    chosen outside of a given set $Q$ of finite measure (This, in particular, implies the result of \cite{Ben}, saying
         that $(S,Q)$ is a weakly annihilating pair). We want to do the same under the conditions of Theorem \ref{thm:THM1}.

		 \begin{customthm}{1'}\label{customthm:MAIN}
 Given $S$ and $Q$ satisfying the conditions of Theorem \ref{thm:THM1}, one can construct a US set for $\mathrm{PW}_S$, disjoint from $Q$.\end{customthm}
                           
		 Observe that in our situation $Q$ is, in general, a set of               
                        infinite measure, so we need an extra argument for the proof.
                        It comes from the 'periodic gaps' condition, which allows us
                        to involve the classic 'Beurling-Malliavin' theorem (see, for instance, \cite{Havin}, Chapter 4). We need the following corollary of that result.

                         \begin{thmx}[Beurling-Malliavin]\label{thmx:BM}
                                 For a given $\sigma > 0$, let $\Gamma$ be a set of integers 
                                 such that for infinitely many dyadic blocks
																	\[
                                          B_k:= [2^k,2^{k+1})  \ , \ k \in \mathbb{N}
																	\]
                                the following condition holds:
																		\[
                                       \# (\Gamma \cap B_k) >  (1-\frac{\sigma}{2}) \left|B_k\right|.      
																	\]
                                Then $\Gamma$ is a US for the space 
																	\[
                                 Y_{\sigma}:=  \{ f = \widehat{F} , F \in L^1([\sigma,1])\}.
																		\]
																		\end{thmx}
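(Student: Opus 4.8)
The plan is to deduce this corollary from the Beurling--Malliavin completeness (density) theorem after a standard reduction. Suppose $f \in Y_\sigma$, so that $f = \widehat{F}$ with $F \in L^1([\sigma,1])$, and assume $f|_\Gamma = 0$; the goal is to conclude $F=0$. First I would strip off the offset $\sigma$ by writing $f(x) = e^{-2\pi i \sigma x} g(x)$, where $g = \widehat{G}$ and $G(s) = F(s+\sigma) \in L^1([0,1-\sigma])$. Since the modulation factor never vanishes, $f|_\Gamma = 0$ is equivalent to $g|_\Gamma = 0$. The function $g$ extends to an entire function of exponential type at most $2\pi(1-\sigma)$ that is bounded on $\mathbb{R}$ (indeed $|g(x)| \le \|G\|_1$), so a nonzero such $g$ would lie in the Cartwright class. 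Thus the statement reduces to the assertion that a bounded entire function of exponential type $2\pi(1-\sigma)$ vanishing on $\Gamma$ must vanish identically, i.e. that $\Gamma$ is a uniqueness set for the Paley--Wiener space on the interval $[0,1-\sigma]$ of length $1-\sigma$, which is dual to completeness of the exponential system $\{e^{-2\pi i \gamma s}\}_{\gamma \in \Gamma}$ on that interval.

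Second, I would invoke Beurling--Malliavin, for which it suffices to check that the effective (Beurling--Malliavin) density of $\Gamma$ strictly exceeds $1-\sigma$, the length of the spectrum. Here the dyadic formulation does exactly the right thing. On each prescribed block $B_k=[2^k,2^{k+1})$ the local density of $\Gamma$ is at least $1-\sigma/2$, since $\#(\Gamma \cap B_k) > (1-\sigma/2)|B_k|$; moreover these blocks meet the Beurling--Malliavin divergence requirement, because with $|B_k| = 2^k$ and $\mathrm{dist}(0,B_k) \asymp 2^k$ each block contributes a fixed positive amount to the series $\sum_k |B_k|^2/(1+\mathrm{dist}(0,B_k)^2)$, so having infinitely many of them forces divergence. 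A family of disjoint $\Gamma$-dense intervals meeting this divergence condition is precisely what certifies that the effective density of $\Gamma$ is at least $1-\sigma/2$. Since $1-\sigma/2 > 1-\sigma$, the effective density exceeds the length of the spectrum, so $\{e^{-2\pi i \gamma s}\}_{\gamma \in \Gamma}$ is complete and $\Gamma$ is a uniqueness set; hence $g \equiv 0$, whence $G=0$ and $f=0$.

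The main obstacle is the density bookkeeping in the middle step: one must fix a single consistent normalization tying the exponential type $2\pi(1-\sigma)$ of $g$ to the completeness length, and then verify rigorously that ``infinitely many dyadic blocks of local density $>1-\sigma/2$'' yields Beurling--Malliavin effective density $\ge 1-\sigma/2$ via the divergence condition $\sum_k |B_k|^2/\mathrm{dist}(0,B_k)^2 = \infty$. The margin $\sigma/2$ between the block density and the threshold $1-\sigma$ is what supplies the needed strict inequality and absorbs the fact that the blocks satisfy the density bound only on average. Comparatively minor points to address are that $G$ lies in $L^1$ rather than $L^2$ (handled by noting that $g$ is bounded and of Cartwright class, so its zero set cannot be denser than its type permits) and that the exponential type is one-sided, so that $g$ belongs to a Hardy-type class on a half-plane.
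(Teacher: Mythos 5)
The paper gives no proof of Theorem B --- it is quoted as a known corollary of the classical Beurling--Malliavin completeness theorem with a pointer to the cited reference --- and your derivation is exactly the standard argument behind that quotation: modulate to move the spectrum to $[0,1-\sigma]$, note that infinitely many dyadic blocks of relative density $1-\tfrac{\sigma}{2}$ form a long system (each contributes $4^k/(1+4^k)\geq \tfrac12$ to $\sum_k |B_k|^2/(1+\mathrm{dist}(0,B_k)^2)$, so the series diverges), conclude that the effective density of $\Gamma$ is at least $1-\tfrac{\sigma}{2}>1-\sigma$, and invoke the completeness-radius theorem in its uniqueness form for bounded Cartwright-class functions of type $2\pi(1-\sigma)$. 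Your handling of the two delicate points --- the strict margin $\tfrac{\sigma}{2}$ and the passage from $L^1$ data to Cartwright-class uniqueness (equivalently, the independence of the completeness radius from the choice of $C(I)$ versus $L^p(I)$) --- is correct, so the proposal is a sound filling-in of what the paper leaves to the literature.
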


                          Combining Lemma \ref{lem:MAINLEM} with Theorem \ref{thmx:BM}, we will now proceed to the proof of
													Theorem \ref{customthm:MAIN}. \newline
                             Let $S$ and $Q$ be given. Consider the set $A := \mathrm{proj}(S)$.
                           It contains a gap which is a proper sub-interval in $[0,1]$. 
                           We can assume it to be $[0,\sigma]$
														\newline
														We first show:
                           \begin{lem}
                          Given  a number $\epsilon > 0$ there is $j_0=j_0(\epsilon)$ 
                                     such that for any $j > j_0$ there is a set $E_j \subset [0,1]$,
                                     $\left|E_j\right|  > 1-\epsilon$, such that for $\alpha \in E_j$,
																	\[
                                      \# [(\mathbb{Z}+\alpha) \cap (B_j \backslash Q)] > (1-\frac{\sigma}{2}) \left|B_j\right|.
																			\]
                       \end{lem}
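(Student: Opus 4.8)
The plan is to run a first-moment (averaging) argument over the translation parameter $\alpha$, followed by Markov's inequality to isolate a large exceptional set. Throughout I record that $|B_j| = 2^j$ and that, since $B_j = [2^j,2^{j+1})$ has integer length and the lattice $\mathbb{Z}+\alpha$ has unit spacing, for every $\alpha \in [0,1]$ one has $\#[(\mathbb{Z}+\alpha)\cap B_j] = 2^j = |B_j|$ exactly. Writing $N_j(\alpha) := \#[(\mathbb{Z}+\alpha)\cap(B_j\cap Q)]$, this gives
\[
\#[(\mathbb{Z}+\alpha)\cap(B_j\setminus Q)] = |B_j| - N_j(\alpha),
\]
so the assertion is equivalent to producing, for all large $j$, a set $E_j\subset[0,1]$ with $|E_j| > 1-\epsilon$ on which $N_j(\alpha) < \tfrac{\sigma}{2}|B_j|$.

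First I would compute the average of $N_j$ over $\alpha$. Expanding $N_j(\alpha) = \sum_{n\in\mathbb{Z}} \mathbf{1}_{Q\cap B_j}(n+\alpha)$ and applying Tonelli's theorem,
\[
\int_0^1 N_j(\alpha)\,d\alpha = \sum_{n\in\mathbb{Z}}\int_n^{n+1}\mathbf{1}_{Q\cap B_j}(x)\,dx = |Q\cap B_j|,
\]
so on average a translate of the lattice meets $Q$ inside the block in exactly $|Q\cap B_j|$ points. The decisive input is then the density-zero hypothesis: since $B_j\subset(-2^{j+1},2^{j+1})$ and $|B_j| = \tfrac12\cdot 2^{j+1}$, applying $|Q\cap(-r,r)| = o(r)$ with $r = 2^{j+1}$ yields
\[
\frac{|Q\cap B_j|}{|B_j|} \le \frac{2\,|Q\cap(-2^{j+1},2^{j+1})|}{2^{j+1}} \longrightarrow 0 \qquad (j\to\infty).
\]

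To finish, I would apply Markov's inequality to the nonnegative function $N_j$, obtaining
\[
\bigl|\{\alpha\in[0,1] : N_j(\alpha) \ge \tfrac{\sigma}{2}|B_j|\}\bigr| \le \frac{2}{\sigma}\cdot\frac{|Q\cap B_j|}{|B_j|}.
\]
Choosing $j_0 = j_0(\epsilon)$ so that $|Q\cap B_j|/|B_j| < \sigma\epsilon/2$ for all $j > j_0$, the exceptional set has measure $< \epsilon$; letting $E_j$ be its complement in $[0,1]$ gives $|E_j| > 1-\epsilon$, and for $\alpha \in E_j$ we get $\#[(\mathbb{Z}+\alpha)\cap(B_j\setminus Q)] = |B_j| - N_j(\alpha) > (1-\tfrac{\sigma}{2})|B_j|$, as required.

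I do not expect a deep obstacle here. The entire content lies in correctly matching the density scale $r = 2^{j+1}$ to the block $B_j$ so that $|Q\cap B_j| = o(|B_j|)$, after which the averaging/Markov mechanism is automatic. The only step deserving genuine care is the exact lattice count $\#[(\mathbb{Z}+\alpha)\cap B_j] = |B_j|$, since it is precisely this identity that converts the bound on $N_j$ into the stated lower bound with the clean constant $1-\sigma/2$; any slack there would have to be absorbed into the choice of $j_0$ but would not affect the argument.
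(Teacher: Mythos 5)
Your proof is correct and follows essentially the same route as the paper: a first-moment computation over the translation parameter $\alpha$ followed by a Chebyshev/Markov estimate. The only (harmless) difference is bookkeeping --- you count the points of $\mathbb{Z}+\alpha$ landing in $Q\cap B_j$ directly on the dyadic block via the exact count $\#[(\mathbb{Z}+\alpha)\cap B_j]=|B_j|$, whereas the paper counts points of $Q^c$ on $(0,2^{j+1})$ and passes to $B_j$ by subtracting the trivial bound on $(0,2^{j})$; both versions spend the same factor of $2$.
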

                
											\begin{proof} For a given $\alpha \in [0,1], r>0$ set
											\[
											G(\alpha,r) = \frac{\#[(\alpha+\mathbb{Z}) \cap Q^c \cap (0,r)]}{r}
											\]
											For every $r>0$, the function $G:\alpha \mapsto G(\alpha,r)$ is measurable and satisfies $0 \leq G \leq 1$. We also have
											\[
											\int\limits_0^1 G(\alpha,r) \mathrm{d}\alpha = \frac{\left|Q^c \cap (0,r)\right|}{r}.
											\]
											
											Since $Q$ is of density $0$, we get
											\[
											\lim\limits_{r \rightarrow \infty} \int\limits_0^1 G(\alpha,r) \mathrm{d}\alpha = 1.
											\]								
											Now fix some $\epsilon>0$. 
											By the last limit, there must be some $r_0$ big enough,
											such that for every $r>r_0$, 
											\[
											\int\limits_0^1 G(\alpha,r) \mathrm{d}\alpha > 1-\frac{\sigma}{4}\epsilon.
											\]
											Let $r>r_0$. Denote 
											\[E_r = \{\alpha \in [0,1] \ | \ G(\alpha,r) > 1-\frac{\sigma}{4}\}
											\]
											Since $0 \leq G \leq 1$, we get
											\[
											1-\frac{\sigma}{4}\epsilon <  \int\limits_0^1 G(\alpha,r) \mathrm{d}\alpha \leq \left|E_r\right| + (1-\left|E_r\right|)(1-\frac{\sigma}{4})
											\]
											which gives
											\[
											\left|E_r\right|>1-\epsilon.
											\]
									
											In particular: for every $\epsilon$, $\exists j_0$ such that $\forall j>j_0$ there exists $E_j$ with $\left|E_j\right|>1-\epsilon$, such that for every $\alpha \in E_j$:
											\[
											\# [(\mathbb{Z}+\alpha) \cap ((0,2^{j+1}) \backslash Q)] > (1-\frac{\sigma}{4}) 2^{j+1} 
											\]
											but since clearly
											\[
											\# [(\mathbb{Z}+\alpha) \cap ((0,2^{j}) \backslash Q)] \leq 2^{j}
											\]
											we get
												\[
                                      \# [(\mathbb{Z}+\alpha) \cap (B_j \backslash Q)] > (1-\frac{\sigma}{4})2^{j+1}-2^{j} = (1-\frac{\sigma}{2}) \left|B_j\right|.
																			\]
											\end{proof}
 As an immediate corollary we have

                          \begin{lem}   Almost every $\alpha \in [0,1]$ satisfies the property:
                                for infinitely many dyadic blocks  $B_j$,             
															\[
																\# \{k \ | \  k+\alpha \in B_j \backslash Q \} > (1- \frac{\sigma}{2}) \left|B_j\right|. 
															\]

  \end{lem}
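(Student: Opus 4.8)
The plan is to reformulate the assertion in terms of the measure of a $\limsup$ of sets. For each dyadic block index $j$ introduce the good set
\[
G_j := \Big\{ \alpha \in [0,1] \ \Big| \ \#\{ k \ | \ k+\alpha \in B_j \backslash Q \} > (1-\tfrac{\sigma}{2})\left|B_j\right| \Big\}.
\]
The previous lemma says precisely that for every $\epsilon>0$ there is $j_0(\epsilon)$ such that $G_j \supseteq E_j$, and hence $\left|G_j\right| > 1-\epsilon$, for all $j > j_0(\epsilon)$; in other words $\left|G_j\right| \to 1$ as $j \to \infty$. The property to be proved is exactly that almost every $\alpha$ belongs to $G_j$ for infinitely many $j$, i.e.\ that $\alpha \in \limsup_j G_j$ for a.e.\ $\alpha$, which is the statement $\left|\limsup_j G_j\right| = 1$.

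Next I would establish this last equality by a monotone-limit (reverse Fatou) argument. Writing $\limsup_j G_j = \bigcap_{n} \bigcup_{j \ge n} G_j$, the tail unions $U_n := \bigcup_{j \ge n} G_j$ decrease in $n$, so $\left|\limsup_j G_j\right| = \lim_n \left|U_n\right|$. For each fixed $n$ and each $\epsilon>0$, choosing any $m \ge \max(n, j_0(\epsilon))$ gives $U_n \supseteq G_m$, whence $\left|U_n\right| \ge \left|G_m\right| > 1-\epsilon$. Letting $\epsilon \to 0$ yields $\left|U_n\right| = 1$ for every $n$, and therefore $\left|\limsup_j G_j\right| = 1$. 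Equivalently, one may simply invoke the reverse Fatou inequality $\left|\limsup_j G_j\right| \ge \limsup_j \left|G_j\right| = 1$, obtained by applying Fatou's lemma on the finite-measure space $[0,1]$ to the indicators $\mathbf{1}_{G_j^c}$.

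I do not expect a genuine obstacle here; the lemma really is an immediate corollary. The only point worth flagging is that one should resist reaching for Borel--Cantelli: we are after an 'infinitely often' conclusion for a.e.\ $\alpha$, and for that the qualitative fact $\left|G_j\right| \to 1$ is already sufficient, so no summable control of the measures of the complements $G_j^c$ is required.
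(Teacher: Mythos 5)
Your argument is correct and matches the paper's intent exactly: the paper states this lemma as an ``immediate corollary'' of the preceding one without further proof, and the deduction you supply --- that $\left|G_j\right| \to 1$ forces $\left|\limsup_j G_j\right| = 1$ via the full-measure tail unions (or reverse Fatou) --- is precisely the routine step being left to the reader. You are also right that Borel--Cantelli is neither needed nor applicable here.
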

                              Theorem \ref{thmx:BM} now implies: for a.e. $\alpha$ the set
														\[
                                    \Gamma_{\alpha}:= {(\mathbb{Z}+\alpha) \cap Q^c },
															\]
                              is a US for the space $Y_{\sigma}$; So we can choose a sequence ${\alpha_j}$  satisfying the last property
                             and dense in $[0,1]$.
                               Remember that $\widehat{X^{\ast}} \subset Y_{\sigma}$, so Lemma \ref{lem:MAINLEM} implies that the set 
															\[
															\Lambda:=\bigcup\limits_j (\Gamma_{\alpha_j}+\alpha_j)
															\]
															is a US for $\mathrm{PW}_S$.
                            Clearly, the above $\Lambda$ is disjoint with $Q$, so Theorem \ref{customthm:MAIN} is proved.
                             Theorem \ref{thm:THM1} also follows.

\section{Proof of Theorem \ref{thm:THM2}}\label{sec:P2}
We begin with remarking that the construction below will give us a more refined structure of the set $Q$, the so called $\epsilon$-thin condition; See remark 2 in the last section for an explanation.

We will use the following easy lemma:
\begin{lem}\label{lem:poly}
Let $n>0$. Then, there exist $d =O(n^3)$, and a $1$-periodic trigonometric polynomial $P:[0,1] \rightarrow \mathbb{R}$ of degree $d$, such that $\left\|P\right\|_{L^2([0,1])}^2 \approx d$, $\left\|P\right\|_{L^{\infty}([0,1])} = O(d)$, and
	\[
	  \max\limits_{t \in I^c} \left|P(t)\right| \leq \frac{1}{n}
	\]
	Where $I = [\frac{1}{2}-\frac{1}{n},\frac{1}{2}+\frac{1}{n}]$ \end{lem}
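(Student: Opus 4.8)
The plan is to take $P$ to be the Fejér kernel recentered at the point $1/2$. Recall that the degree-$m$ Fejér kernel
\[
K_m(t) = \sum_{|k| \le m}\left(1 - \frac{|k|}{m}\right)e^{2\pi i k t} = \frac{1}{m}\left(\frac{\sin \pi m t}{\sin \pi t}\right)^2
\]
is a nonnegative real trigonometric polynomial of degree $m$, and I would set $P(t) := K_m(t - \tfrac12)$. Shifting the argument by $1/2$ only multiplies the $k$-th coefficient by $(-1)^k$, so $P$ remains a real trigonometric polynomial of degree $m$; its unique peak, of height $K_m(0) = m$, now sits at $t = 1/2$, i.e.\ at the centre of $I$. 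It then remains to choose $m \asymp n^3$ and to verify the three quantitative requirements.

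For the off-$I$ bound I would use the elementary inequality $|\sin \pi s| \ge 2|s|$ valid for $s \in [-\tfrac12,\tfrac12]$, which yields the pointwise decay estimate
\[
K_m(s) \le \frac{1}{m \sin^2 \pi s} \le \frac{1}{4m s^2}, \qquad s \in [-\tfrac12,\tfrac12]\setminus\{0\}.
\]
If $t \in I^c$ then $|t - \tfrac12| \ge \tfrac1n$, so $|P(t)| \le n^2/(4m)$. Hence the requirement $\max_{t \in I^c}|P(t)| \le 1/n$ holds as soon as $m \ge n^3/4$, and I would simply take $m = \lceil n^3/4 \rceil$, so that the degree is $d = m = O(n^3)$ as claimed.

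With this choice the remaining two estimates come for free, because for the Fejér kernel the degree, the sup-norm and the squared $L^2$-norm are all of the same order $m$. Indeed $\|P\|_{L^\infty} = \|K_m\|_{L^\infty} = K_m(0) = m = d = O(d)$, while by Parseval (translation being an $L^2$-isometry on the circle)
\[
\|P\|_{L^2([0,1])}^2 = \|K_m\|_{L^2([0,1])}^2 = \sum_{|k|\le m}\left(1-\frac{|k|}{m}\right)^2 = \frac{2m}{3} + O(1) \approx d .
\]
This establishes all four assertions.

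The one point requiring care --- and the reason the degree must be cubic rather than, say, quadratic in $n$ --- is that the Fejér kernel decays only quadratically, like $1/(m s^2)$, away from its peak. To push its value below $1/n$ at distance $1/n$ from the peak one loses a factor $n^2$ to the decay and a further factor $n$ to the normalization, forcing $m \gtrsim n^3$. The mild subtlety is then to confirm that this same $m$ keeps $\|P\|_{L^\infty}$ and $\|P\|_{L^2}^2$ both comparable to $d$; this succeeds precisely because, for the Fejér kernel, all three quantities scale like $m$, so no genuine competition between the three requirements arises.
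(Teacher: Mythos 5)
Your proposal is correct and follows exactly the paper's own (one-line) proof: the paper also takes $P$ to be the Fej\'er kernel translated to have its peak at $t=\tfrac12$, with degree $d=O(n^3)$ determined by the requirement $\max_{I^c}|P|\le \tfrac1n$. You have simply supplied the standard verifications (the decay bound $K_m(s)\le \tfrac{1}{4ms^2}$, the sup-norm $K_m(0)=m$, and the Parseval computation $\|K_m\|_2^2\approx \tfrac{2m}{3}$) that the paper leaves to the reader.
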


\begin{proof} Take $P(t) = F_d(t+\frac{1}{2})$, where $F_d$ denotes the F\'ejer Kernel, and $d$ is large enough depending on $n$ as stated above. \end{proof}

Let us now turn to the main part of the proof, where we shall construct the $n$-th portion of the sets $S,Q$, and a function $f_n$ which will live in $\mathrm{PW}_{S_n}$ while having its $L^2$-norm localized on $Q_n$. 

 \begin{lem}\label{thm:MAIN2} 
For every $n \geq 1$ there exist a number $N$, sets $S_n,Q_n$ and a function $f_n$, such that:

\begin{itemize}
	\item  $S_n$ has a periodic gap:
	\[ 
	S_n \cap [k+0.4, k+0.6] = \emptyset \text{ for all } k \in \mathbb{Z}
	\]
	and 
	\[
	\left|S_n\right| \leq \frac{1}{2^n}.
	\]
	\item  $Q_n \subset [-N,N]$, and it satisfies, for every interval $I$ of length $\left|I\right|>\frac{1}{N}$,
	\[
	\left|Q_n \cap I\right| \leq \frac{1}{n} \left|I\right|.
	\]
	\item $f \in \mathrm{PW}_{S_n}$, and
	\[
	\int\limits_{Q_n^c} \left|f\right|^2 \leq \frac{1}{n} \int \left|f\right|^2.
	\]
\end{itemize}

 \end{lem}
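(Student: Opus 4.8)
The plan is to build $f_n$ as a product of an oscillating periodic factor and a slowly varying envelope,
\[
f_n(x)=P(Mx)\,g_\delta(x),\qquad g_\delta(x):=\int_0^\delta e^{2\pi i x\xi}\,d\xi,
\]
where $P$ is the polynomial furnished by Lemma \ref{lem:poly} (applied with a suitable constant multiple of $n$ in place of $n$), and $M\in\mathbb{N}$, $\delta>0$ are parameters to be fixed. Since $g_\delta=\mathcal F^{-1}\chi_{[0,\delta]}$ and $P(Mx)=\sum_{|k|\le d}c_k e^{2\pi i kMx}$ with $c_k=\widehat P(k)$, we get $\widehat{f_n}=\sum_{|k|\le d}c_k\,\chi_{[kM,\,kM+\delta]}$, so $\mathrm{spec}(f_n)\subset S_n:=\bigcup_{|k|\le d}[kM,kM+\delta]$. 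Because these intervals sit at the integers $kM$ and have length $\delta<0.4$, the set $S_n$ is automatically disjoint from every $[k+0.4,k+0.6]$, so the periodic-gap requirement holds for free; and $|S_n|=(2d+1)\delta$, which we make $\le 2^{-n}$ by choosing $\delta$ small. The concentration set is declared to be the scaled bump set $Q_n:=[-N,N]\cap\{x:\ Mx\bmod 1\in I\}$, a union of intervals of width $\tfrac{2}{nM}$ spaced $1/M$ apart.

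I would fix the parameters in the order $d\to\delta\to N\to M$, which avoids circularity: first $d=O(n^3)$ from Lemma \ref{lem:poly}; then $\delta\le 2^{-n}/(2d+1)$; then $N$ of order $n/\delta$; and finally an integer $M\ge 2N$. Note $\|f_n\|_2^2=\delta\sum_{|k|\le d}|c_k|^2=\delta\|P\|_{L^2([0,1])}^2\approx\delta d$ by Plancherel and Parseval (the spectral intervals being disjoint). For the density condition, since $M\ge 2N$ each bump has width $\tfrac{2}{nM}\le\tfrac1N$, so any interval $J$ with $|J|>1/N$ is longer than a single bump and meets at most $M|J|+1$ of them, whence $|Q_n\cap J|\le(M|J|+1)\tfrac{2}{nM}\le\tfrac1n|J|$ once the constant is absorbed into the Lemma \ref{lem:poly} parameter. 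For the $L^2$-concentration I would split $\int_{Q_n^c}|f_n|^2$ into the envelope tail $\int_{|x|>N}$ and the off-bump bulk $\int_{[-N,N]\setminus Q_n}$. On the tail one uses $|g_\delta(x)|\le\frac1{\pi|x|}$ together with $\int_k^{k+1}|P(Mx)|^2\,dx=\|P\|_{L^2([0,1])}^2\approx d$ (here $M\in\mathbb{N}$ matters) to get a bound of order $d/N$, i.e.\ ratio of order $1/(\delta N)\le\tfrac1{2n}$ by the choice of $N$; on the bulk one uses $|g_\delta|\le\delta$ and $|P(Mx)|\le\tfrac1n$ off the bumps to get a bound of order $\delta^2 N/n^2$, i.e.\ ratio of order $1/(nd)\ll\tfrac1{2n}$.

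The main obstacle is exactly this simultaneous balancing. The smallness of $|S_n|$ forces $\delta$ to be of order $2^{-n}/d$, which spreads the envelope $g_\delta$ over a scale $1/\delta\approx d\,2^n$; the mere $1/x$ decay of $g_\delta$ then forces $N$ to be at least of that order just to localise $f_n$ inside $[-N,N]$. Meanwhile the ``no large chunk'' part of the density condition forbids $Q_n$ from containing any interval of length $1/N$, and this is what makes the extra scaling factor $M\ge 2N$ indispensable: without the compression by $M$ the function would concentrate on fat bumps of width comparable to $1/n$, far exceeding $1/N$, and the density condition would fail. Checking that small spectrum, localisation in $[-N,N]$, and thinness at scale $1/N$ are mutually compatible is the heart of the matter; once the ordering $d\to\delta\to N\to M$ is in place, each of the three verifications reduces to a routine Plancherel or geometric-series estimate.
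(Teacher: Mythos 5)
Your construction is correct and follows essentially the same route as the paper: a Fej\'er-type polynomial from Lemma \ref{lem:poly}, compressed by a large integer scale and multiplied by an envelope whose spectrum is a single short interval, so that $S_n$ is a union of $2d+1$ intervals of length $\delta$ anchored at multiples of the scale and $Q_n$ is the periodic family of narrow bumps where the polynomial is large, intersected with $[-N,N]$. The only deviations are cosmetic: you use the sharp cutoff $\chi_{[0,\delta]}$ (whose $1/|x|$ decay you correctly verify is enough for the tail, since $|P(M\cdot)|^2$ averages to $\approx d$ on unit intervals) where the paper uses a smooth bump $\psi$, and you decouple the polynomial's scaling parameter $M$ from the window size $N$, whereas the paper takes them equal --- which also works, so $M\ge 2N$ is a convenience rather than a necessity.
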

Assuming Lemma \ref{thm:MAIN2}, let us prove the main theorem.
\begin{proof}[Proof of Theorem \ref{thm:THM2}] For each $n \geq 1$ we have $N,f_n,S_n,Q_n$ as in the lemma. 
Set $S = \bigcup\limits_n S_n$ and $Q = \bigcup\limits_n (Q_n+2N)$ (note that the translation of $Q_n$, does not affect the $L^2$ norm of $f_n$).
Clearly, $Q$ is of density $0$ (as the disjoint union of sets with density going to $0$), $S$ has periodic gap and measure at most $1$, and the series $\{f_n\}$ shows the pair $(S,Q)$ is \textsl{not} strongly annihilating.
 \end{proof}

\begin{proof}[Proof of Lemma \ref{thm:MAIN2}] 

We first describe the construction of the function. 
\begin{itemize}
\item Given $n$, use Lemma \ref{lem:poly} to obtain a trigonometric polynomial $P(t)$. Take some 'big' $N=N(n)$ (to be determined later).
	\item Let $P_N(t) = P(Nt)$. Denote $\mu = \widehat{P_N}$, then
	\begin{equation*}
	\mu = \sum\limits_{\left|k\right| \leq d} \widehat{P_N}(k) \delta(t-Nk).
	\end{equation*}
	Here, $\delta(\cdot)$ denotes the Dirac delta measure.
	\item Let $\psi \in C^{\infty}(\mathbb{R})$, $\text{Supp}(\psi) \subset [0,1]$, with $\left\|\psi\right\|_2 = 1$. Set 
	\[
	\varphi(t) = \psi(2^n(2d+1)t).
	\]
	\item Set $F = \mu \ast \varphi$, and let $f = \widehat{F}$; Then $f(t) = P_N(t) \widehat{\varphi}(t)$.
\end{itemize} 
The function $F$ is clearly supported on the set
\begin{equation*}
S_n = \bigcup\limits_{\left|j\right| \leq d} [jN,jN+\frac{1}{2^n(2d+1)}],
\end{equation*}
Which is the union of $2d+1$ intervals, each of length $\frac{1}{2^n(2d+1)}$.
Note that we have $\left|S_n\right| = \frac{1}{2^n}$; Moreover,
\[
S_n \cap [k+0.4, k+0.6] = \emptyset \text{ for all } k \in \mathbb{Z}.
\]
Let
\begin{equation*}
Q_n = \bigcup\limits_{j=-{N^2}+1}^{{N^2}-1} [\frac{j+\frac{1}{2}-\frac{1}{n}}{N}, \frac{j+\frac{1}{2}+\frac{1}{n}}{N}].
\end{equation*} 

Noting the following two facts, easily verifiable by a straightforward calculation:
\begin{itemize}
	\item Inside $(Q_n)^c \cap [-N,N]$, the $L^2$ norm of $f$ will be small because of Lemma \ref{lem:poly}: the function $f$ is concentrated on the intervals that are in $Q_n$; 		
	Specifically, we use the bound (for some absolute $c>0$):
	\[
\left|P_N \right| \leq \frac{c}{n} 
	\]
	and since the entire expression can be decomposed as
	\[
	\sum\limits_j\int\limits_{j}^{j+\frac{1}{N}} \left|P_N\right|^2 \left|\widehat{\varphi}(t)\right|^2 \mathrm{d}t
	\]	
	and on each chunk $\left|P_N \right|$ is small, the entire integral is small as well.
\item Outside $[-N,N]$, the $L^2$ norm of $f$ will be small because of the decay of $\varphi$, if $N$ is big enough (depending on $n$), while the size of $P_N$ can be bounded by \ref{lem:poly}; Specifically, we use the bounds (for some absolute $C_1,C_2>0$):
\begin{equation*}
\left|\widehat\psi(t)\right| \leq \frac{C_1}{1+t^2} \text{ and } \left\|P\right\|_{L^{\infty}([0,1])} \leq C_2d.
\end{equation*}
\end{itemize}
We get that: if $N = N(n)$ is chosen big enough, we have
\begin{equation*}
\int\limits_{(Q_n)^c} \left|f\right|^2 \leq \frac{C}{n} \left\|f\right\|_2^2
\end{equation*}
as required.
\end{proof}

\section{Remarks}
\begin{enumerate}
	\item   It seems interesting to compare Theorem \ref{thm:THM1}  with the result in \cite{NO},   
         where a set $S$ is constructed, of finite measure, such that its indicator function $1_S$ has the (closed)  				spectrum $Q$ of density zero.
         Notice that the set $S$ there does not have gaps.      
     \item In the proof of Theorem \ref{thm:THM2} it is not difficult, given $\epsilon>0$, to make the set $Q$ to be $\epsilon$-thin
          in the sense of \cite{Wolff}, which means, denoting $\rho(x) = \min(1,\frac{1}{\left|x\right|})$,
					\[
					\left|Q \cap [x-\rho(x),x+\rho(x)]\right| \leq 2\epsilon \rho(x) \text{ for every } x \in \mathbb{R}.
					\]
					In the notations of the proof, it will be $\frac{C}{n}$-thin for some absolute constant $C$.
					So, in the construction we can control the size of the gaps, compared to their distance from the origin.
					 In fact we can take care on all $\epsilon$ at once such that for any $\epsilon>0$,
					there exists some $M=M(\epsilon)$ such that $Q\cap \{\left|x\right|>M\}$ is $\epsilon$-thin.
\end{enumerate}

 \bibliographystyle{alpha}
  \bibliography{AnnihilationPaper}

\end{document}